\newtheorem{thm}{Theorem}[section]
\newtheorem*{thm*}{Theorem}
\newtheorem{lemma}[thm]{Lemma}
\newcommand{\beq}{\begin{equation}}
\newcommand{\eeq}{\end{equation}}
\newtheorem{remark}{Remark}
\newcommand{\Q}{\mathbb{Q}}
\newcommand{\q}{\mathfrak{P}}
\newcommand{\p}{\mathfrak{p}}
\newcommand{\N}{\mathbb{N}}
\newtheorem{Definition}{Definition}
\def\P{\mathfrak{P}}
\def\p{\mathfrak{p}}
\newcommand{\kommentar}[1]{}
\newtheorem{theorem}{Theorem}
\newtheorem{hypothesis}{Hypothesis}
\newtheorem*{remark*}{Remark}
\newtheorem{Proposition}{Proposition}[section]
\definecolor{pink}{rgb}{1,.2,.6}
\definecolor{orange}{rgb}{0.7,0.3,0}
\definecolor{blue}{rgb}{.2,.6,.75}
\definecolor{green}{rgb}{.4,.7,.4}
\definecolor{purple}{RGB}{127,0,255}
\begin{document}
\numberwithin{equation}{section}

\title{Consecutive pure fields of the form  $\mathbb{Q}\left(\sqrt[l]{a}\right)$ with large class numbers}

 \author[Das]{Jishu Das}
 \address{Department of Applied Sciences and Humanities, Mathematics Section, 
 COEP Technological University, Wellesley Road, Shivajinagar, Pune, Maharashtra 411005, India.}
 \email{dasj.maths@coeptech.ac.in}
\author[Krishnamoorthy]{Srilakshmi Krishnamoorthy}
\address{Indian Institute of Science Education and Research Thiruvananthapuram, Maruthamala PO, Vithura,
Thiruvananthapuram - 695551,
 Kerala, India.}
 \email{srilakshmi@iisertvm.ac.in}

\keywords{}
\subjclass[2020]{Primary:11R29, Secondary: 11R21}

\date{\today}

\begin{abstract} 
Let  $l$ be a rational prime greater than or equal to $3$ and $k$ be a given positive integer. Under a conjecture due to  Langlands and an assumption on upper bound for the regulator of fields of the form $\mathbb{Q}\left(\sqrt[l]a\right)$,  we prove that there are atleast $x^{1/l-o(1)} $ integers $1\leq d\leq x$ such that the consecutive pure fields of the form $\mathbb{Q}\left(\sqrt[l]{d+1}\right),  \dots ,\mathbb{Q}\left(\sqrt[l]{d+k}\right) $ have arbitrary large class numbers. 
\end{abstract}
\maketitle

\section{Introduction}
The class number of an algebraic number field plays a vital role in number theory.
An important problem concerning class number of a number field is to have an understanding of the size of the class number.  Let $h_{K}$ denote the class number of the number field $K.$ Let $d$ be the fundamental discriminant of the number field $\Q(\sqrt{d})$. Assuming the Generalized Riemann Hypothesis (GRH), Littlewood \cite{LW} proves
\begin{equation}\label{LE1}
h_{\Q(\sqrt{-d})}\leq \left( \frac{2e^\gamma}{\pi} +\text{o}(1) \right) \sqrt{|d|}\log\log(|d|), 
\end{equation}
where $\gamma$ denotes the  Euler-Mascheroni constant. Under the same hypothesis, Littlewood shows the existence of  infinitely many fundamental
discriminants  $d$ such that 
$$h_{\Q(\sqrt{-d})}\geq \left( \frac{2e^\gamma}{\pi} +\text{o}(1) \right) \sqrt{|d|}\log\log(|d|). $$
For positive discriminants, one can show  that the  bound 
\begin{equation}\label{LE2}
h_{\Q(\sqrt{d})}\leq \left( 4e^\gamma +\text{o}(1) \right) \sqrt{d} \frac{\log\log(d)}{\log d}, 
\end{equation}
holds under GRH.  Montgomery and  Weinberger \cite{Mon} show that this bound cannot be improved for real quadratic fields,
apart from the value of the constant and an unconditional result without GRH. 
They prove that there exist infinitely
many real quadratic fields 
$\Q(\sqrt{d})$
 such that 
\begin{equation}\label{MW}
h_{\Q(\sqrt{d})}\gg  \sqrt{d} \frac{\log\log(d)}{\log d}.
\end{equation}
Lamzouri \cite[Theorem 1.2]{Lam} improved the result by Montgomery and  Weinberger by showing the following. Let $x$ be large.
There are at least $x
^{\frac{1}{2}-\frac{1}{\log \log x}}$ and at most $x^{\frac{1}{2}+\text{o}(1)}$
real quadratic fields $\Q(\sqrt{d})$ with discriminant
$d \leq x$ such that
\begin{equation}\label{Lamz}
  h_{\Q(\sqrt{d})}\geq \left( 2 e^\gamma +\text{o}(1) \right) \sqrt{d} \frac{\log\log(d)}{\log d}.  
\end{equation}

Assuming GRH and Artin's conjecture for Artin L-functions, Duke \cite[Theorem 1]{Dukecubic} proves an analogue lower bound like equation \eqref{MW} for more general number fields. To be precise, Duke considers the set $\mathcal{K}_n$ consisting of totally real number fields of degree $n $ whose Galois closures have $S_n$ as their Galois group. \cite[Theorem 1]{Dukecubic} states that  
 there is a constant $c>0$ depending only on $n$ such that there exist  $K\in \mathcal{K}_n$ with
 arbitrarily large discriminant $d$ for which $$h_{K}>c\sqrt{d} \left(\frac{\log\log d}{\log d}\right)^{n-1}.$$
  Daileda \cite[Theorem 1]{Daileda} proves 
 an unconditional version of Duke's result for $\mathcal{K}_3.$
 He further \cite[Theorem 3]{Daileda} shows that there is an absolute constant $c>0$ such that 
 $$h_{\Q(\sqrt[3]{d})}\geq  c\sqrt{|d|}  \frac{\log\log |d|}{\log |d|}.$$
  
In 2023, Cherubini, Fazzari, Granville, Kala and Yatsyna \cite{CFGKY} prove that for a given positive integer $k,$ there are at 
 least   $x^{1/2-\text{o}(1)}$ integers $1\leq d\leq x$ such that for all $j=1,\dots,k,$ 
$$h_{\Q(\sqrt{d+j})}\gg_k\frac{\sqrt{d}}{\log d}\log \log d.$$
To generalize the above, recently Byeon and Yhee \cite{DBDY} show the following: Given a positive integer $k,$ there are at least $x^{1/3-\text{o}(1)}$ integers $1 \leq d\leq x$ such that the consecutive pure cubic fields $\Q(\sqrt[3]{d+1})$, \dots, $\Q(\sqrt[3]{d+k})$ have arbitrary large class number.   To be precise, we have the following theorem.
\begin{theorem}[{{\cite[Theorem 1.1]{DBDY}}}]\label{DBDY}
    Let $k$ be a fixed positive integer. There are atleast $x^{1/3-\text{o}(1)}$ integers $1 \leq d\leq x$ such that the class number $h_{\Q(\sqrt[3]{d+j})}$ of the pure cubic field $\Q(\sqrt[3]{d+j})$ satisfy $$h_{\Q(\sqrt[3]{d+j})}\gg_k\frac{\sqrt d}{\log d}\log \log d,$$
    for all $j=1,\dots,k.$
\end{theorem}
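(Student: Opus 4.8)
The plan is to bound $h_{\Q(\sqrt[3]{d+j})}$ from below through the analytic class number formula, having first built a large set of $d$ for which each field $K_j:=\Q(\sqrt[3]{d+j})$ simultaneously has a small regulator and an Artin $L$-function that is not small at $s=1$. Discarding a density-zero set of $d$, we may assume every $d+j$ is squarefree and $\ncon\pm1\modd9$; then the Galois closure $\widetilde{K_j}$ of $K_j$ satisfies $\Gal(\widetilde{K_j}/\Q)\cong S_3$ and $|\Disc(K_j)|\asymp(d+j)^2\asymp d^2$. Factoring the Dedekind zeta function as $\zeta_{K_j}(s)=\zeta(s)L(s,\rho_j)$, where $\rho_j$ is the two-dimensional irreducible representation of $\Gal(\widetilde{K_j}/\Q)$, and using Langlands--Tunnell to write $L(s,\rho_j)=L(s,f_j)$ for a holomorphic newform $f_j$ (so that $L(s,\rho_j)$ is entire with $L(1,\rho_j)\ne0$), the class number formula gives
\[
h_{K_j}R_{K_j}=\frac{\sqrt{|\Disc(K_j)|}}{2\pi}\,L(1,\rho_j)\asymp d\,L(1,\rho_j).
\]
It therefore suffices to produce $\gg x^{1/3-\text{o}(1)}$ integers $d\le x$ with $R_{K_j}\ll_k\sqrt d\,\log d$ and $L(1,\rho_j)\gg_k\log\log d$ for all $j\le k$, since then $h_{K_j}\gg_k\sqrt d\,(\log\log d)/\log d$.

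For the lower bound on $L(1,\rho_j)$, I would force $d+j$ to be a non-zero cube modulo every prime $p\equiv1\,(3)$ with $p\le y:=c\log d$. Such $p$ (coprime to $3(d+j)$) then split completely in $K_j$, so the Euler factor of $L(s,\rho_j)$ at $p$ equals $(1-p^{-s})^{-2}$; feeding this into the classical zero-free region and Rankin--Selberg coefficient bounds for the automorphic $L$-function $L(s,f_j)$ — this is exactly where the modularity of $\rho_j$ is used, a theorem for $S_3$-cubics but Langlands' conjecture in the higher-degree generalisation — makes the tail of the Euler product $\asymp1$, so that $L(1,\rho_j)\gg\prod_{p\le y,\,p\equiv1\,(3)}(1-p^{-1})^{-2}\gg(\log\log d)^2$. (If instead one only needs $L(1,\rho_j)\gg_\epsilon d^{-\epsilon}$, this holds unconditionally by Brauer--Siegel and already suffices whenever $R_{K_j}\ll_k\log d$.)

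The regulator is supplied by the second hypothesis, the assumed upper bound $R_{\Q(\sqrt[3]m)}\ll\sqrt m\,\log m$ applied with $m=d+j$; on the subfamily where $d+j$ sits next to a perfect cube one can alternatively exhibit an explicit unit of polynomial size in $K_j$ and obtain $R_{K_j}\ll_k\log d$ outright, but since $d+1,\dots,d+k$ cannot all lie within $O(1)$ of a cube, some uniform regulator control is genuinely needed for the consecutive problem. The central task is then the construction of the family: one must produce a polynomial $d=d(t)$ of degree $3$, together with congruence conditions on $t$ modulo $\prod_{p\le y}p$, so that for each $j\le k$ the value $d(t)+j$ is squarefree, $\ncon\pm1\modd9$, and a non-zero cube modulo every $p\equiv1\,(3)$ with $p\le y$, while $\Q(\sqrt[3]{d(t)+j})$ stays of the shape covered by the regulator bound. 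For a single prime the Weil bound yields a positive density $\gg_k3^{-k}$ of admissible residues of $t$, so all the congruences together cost only a factor $3^{-k\pi(y)}=x^{-\text{o}(1)}$; ranging $t$ over the admissible values $\le x^{1/3}$ then produces $\gg x^{1/3-\text{o}(1)}$ distinct integers $d\le x$, the exponent $1/3$ being the cubic analogue of the exponent $1/2$ in \cite{CFGKY}.

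The hardest part is this simultaneous construction — making a single polynomial parametrisation of $d$ carry all $k$ of the congruence-and-shape conditions at once while keeping $d(t)$ cubic in $t$; this is the cubic counterpart of the construction of \cite{CFGKY}, and it is where most of the work resides. The remaining points are routine by comparison: checking that distinct admissible $t$ give distinct $d$ (so that one counts integers, not tuples), verifying that squarefreeness and the condition mod $9$ persist under the congruences away from a bounded set of primes, and, in the analytic step, either invoking GRH for $L(s,f_j)$ or substituting a second-moment/large-sieve estimate over the family to secure $L(1,\rho_j)\gg(\log\log d)^2$ unconditionally.
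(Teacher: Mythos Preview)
Your overall architecture --- class number formula, force complete splitting of small primes $p\equiv 1\pmod 3$ via cubic-residue conditions to make $L(1,\rho_j)$ large, and control the regulator --- is the same as the paper's (and \cite{DBDY}'s). The analytic step is essentially right, though the paper truncates at $(\log D_j)^\epsilon$ and uses a zero-density estimate (Proposition~\ref{main prop}) rather than the zero-free region, and your exponent in $(\log\log d)^2$ should be $1$: the Euler product over $p\le y$, $p\equiv 1\pmod 3$ contributes $\asymp\log y\asymp\log\log d$, not its square.

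The genuine gap is in the regulator step. You write that ``$d+1,\dots,d+k$ cannot all lie within $O(1)$ of a cube'' and therefore fall back on a regulator \emph{hypothesis} of the shape $R_{\Q(\sqrt[3]{m})}\ll\sqrt m\log m$. This is exactly backwards: the whole point of the construction is to take $d=\Delta(m)=(mP)^3$ with $P=\text{lcm}(1,\dots,k)$, so that \emph{every} $d+j$ lies at distance $j\le k$ from the perfect cube $(mP)^3$. Moreover, since $j\mid P$ we have $j\mid 3(mP)^2$, so each $d+j$ has the form $n^3+r$ with $r\mid 3n^2$; for such integers the explicit unit $r/(\sqrt[3]{n^3+r}-n)^3$ gives $R_{K_j}\ll_k\log d$ unconditionally (this is \cite[Proposition~2.2]{DBDY}, recalled in Section~\ref{sec 2.1}). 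The cubic parametrisation $d=(mP)^3$ is thus not just a device for obtaining the exponent $1/3$; it is what supplies the regulator bound simultaneously for all $j$ and makes Theorem~\ref{DBDY} unconditional. The hypothesis you invoke is precisely Hypothesis~\ref{hypothesis 1} of the present paper, needed only for $l>3$ where the unit group has rank $(l-1)/2>1$ and a single explicit unit no longer pins down the regulator.
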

Let $l$ be a given prime throughout the article.  We wish to answer the following in this article: Given a positive integer $k$ and an prime $l>3,$ do there exist at least $x^{1/l-\text{o}(1)}$ integers $1\leq d\leq x$ such that the pure fields $\Q(\sqrt[l]{d+1})$, $\Q(\sqrt[l]{d+2})$, \dots, $\Q(\sqrt[l]{d+k})$ have arbitrary large class numbers?  We start with a hypothesis that will be useful. 
\begin{hypothesis}\label{hypothesis 1}
Let $a=n^l+r$ where $n, r$ are positive integers and $r|ln^l $ with $l>3.$   Let $R_{\Q\left(\sqrt[l]a\right)}$ and $D_{\Q\left(\sqrt[l]a\right)}$ denote the regulator and the absolute value of the discriminant of the field $\Q(\sqrt[l]a ). $  Then $$R_{\Q\left(\sqrt[l]a\right)}=\text{o}\left( \sqrt{D_{\Q\left(\sqrt[l]a\right)}}\log\log D_{\Q\left(\sqrt[l]a\right)}\right).$$
\end{hypothesis}
    In Section \ref{sec 2.1}, we briefly discuss Hypothesis \ref{hypothesis 1} with a heuristic argument (see Section \ref{sec 2.1} and Remark \ref{Nechyp}), along with some computational evidence in Section \ref{comevi}. 

Let $\pi$ be a  finite dimensional complex representation of $\text{Gal}(\Q(\omega,\sqrt[l]{b})/\Q),$ 
 where $\omega$ is a primitive $l$-th root of unity and $b $ is a $l$-th powerfree positive integer. 
 By Langlands conjecture (\cite[Conjecture 1.8.1]{Bump}), there is an automorphic representation $\tilde{\pi}_{D}$ with conductor $D$ such that $$ L(s,\Q(\omega,\sqrt[l]{b})/\Q,\pi)=L(s,\Q(\omega,\sqrt[l]{b})/\Q,\tilde{\pi}_{D}),$$ where $D$ denotes the absolute value of the discriminant of $\Q(\sqrt[l]{b}),$ and   $L(s,\Q(\omega,\sqrt[l]{b})/\Q,\pi)$, $L(s,\Q(\omega,\sqrt[l]{b})/\Q, \tilde{\pi}_{D})$ denote the Artin L-function associated to $\pi$ and $\tilde{\pi}_{D}$ respectively. 
The following holds under the Langlands conjecture(\cite[Conjecture 1.8.1]{Bump}).
\begin{theorem}\label{main theorem}
    Let $k$ be a fixed positive integer and $l$ be a rational prime greater than or equal to $3$.  Let $D_j$ be the absolute value of the discriminant of the
pure field $\mathbb{Q}\left(\sqrt[l]{\Delta(m)+j} \right)$ (see Section \ref{two} for $\Delta(m)$). Let $\omega$ be a primitive $l$-th root of unity and $0<\epsilon<1$.  Suppose that 
$$
\sum_{ \substack{p\leq (\log D_j)^\epsilon, \\ p\equiv \,1(\text{mod}\, \,l) \\\left( \frac{D_j}{\pi}\right)_l=1} }  \frac{l-1}{p}+ \sum_{ \substack{p\leq (\log D_j)^\epsilon, \\ p\equiv 1 \,(\text{mod}\, \, l) \\\left( \frac{D_j}{\pi}\right)_l=1}}' \   \frac{\lambda(p)}{p}     \gg_k \log\log  \log x ,
$$
where $\lambda(p)$ corresponds to the Dirichlet series given by \eqref{L(s,pi)} for the Artin L-function $$L\left(s,\mathbb{Q}\left(\omega,\sqrt[l]{\Delta(m)+j} \right)/\Q, \tilde{\pi}_{d_{\mathbb{Q}\left(\sqrt[l]{\Delta(m)+j} \right)}}\right),$$ and $\sum'$ denotes the summation over $p\leq (\log D_j)^\epsilon$ such that any one of the conditions given below  $p\leq (\log D_j)^\epsilon$ does not get satisfied. 
Suppose $\mathbb{Q}\left(\sqrt[l]{\Delta(m)+j} \right)$  satisfy Hypothesis \ref{hypothesis 1} for all $j=1,\dots, k. $
Then there are at least $x^{1/l-\text{o}(1)}$ integers $1 \leq d\leq x$ such that the pure fields $\Q(\sqrt[l]{d+1})$, \dots , $\Q(\sqrt[l]{d+k})$   have arbitrary large class numbers.
\end{theorem}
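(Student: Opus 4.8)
The plan is to combine the classical analytic class number formula with an Euler--product lower bound for the Artin $L$-function of each field $\Q(\sqrt[l]{d+j})$, the arithmetic input being the splitting behaviour of the integers $\Delta(m)+j$ produced in Section~\ref{two lemmas}; for $l=3$ the assertion is Theorem~\ref{DBDY}, so I assume $l\ge5$. Fix $j\in\{1,\dots,k\}$ and write $K=K_{m,j}=\Q\!\left(\sqrt[l]{\Delta(m)+j}\right)$, a degree-$l$ field with $r_1=1$ real and $r_2=(l-1)/2$ complex places and $w_K=2$ roots of unity. As $\Delta(m)+j$ is (generically) not an $l$-th power, $\zeta_K(s)=\zeta(s)\,L(s,\rho_j)$, where $\rho_j$ is the unique $(l-1)$-dimensional irreducible representation of $\Gal\!\left(\Q(\omega,\sqrt[l]{\Delta(m)+j})/\Q\right)\cong C_l\rtimes C_{l-1}$; the quotient $L(s,\rho_j)$ is entire (Aramata--Brauer), and under the Langlands conjecture \cite[Conjecture~1.8.1]{Bump} it equals $L(s,\tilde\pi_j)$ for a cuspidal automorphic representation $\tilde\pi_j$ of $\GL_{l-1}(\mathbb{A}_\Q)$ with the expected functional equation, Ramanujan bound, and Riemann hypothesis. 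Comparing residues at $s=1$ in the class number formula yields
\[
h_K=\frac{\sqrt{D_j}}{(2\pi)^{(l-1)/2}\,R_K}\,L(1,\rho_j),
\]
whence, by Hypothesis~\ref{hypothesis 1} in the shape $R_K=\text{o}\!\left(\sqrt{D_j}\,\log\log D_j\right)$, the theorem for this $j$ reduces to the lower bound $L(1,\rho_j)\gg_k\log\log D_j$.

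To establish that bound I would argue with Mertens-type estimates on the Euler product. Writing $L(s,\tilde\pi_j)=\prod_p\prod_i(1-\alpha_{i,p}p^{-s})^{-1}$ with $|\alpha_{i,p}|\le1$, one has $\log L(1,\rho_j)=\sum_p \lambda_j(p)/p+\text{O}_l(1)$, where $\lambda_j(p)=\sum_i\alpha_{i,p}=\operatorname{tr}\rho_j(\mathrm{Frob}_p)$ is the $p$-th Dirichlet coefficient of $L(s,\tilde\pi_j)$. I then split the prime sum at $y=(\log D_j)^{\epsilon}$. For every prime $p\equiv1\pmod{l}$ with $p\le y$, the construction of $\Delta(m)$ in Section~\ref{two lemmas} forces $\Delta(m)+j$ to be a non-zero $l$-th power residue mod $p$, so $p$ splits completely in $\Q(\omega,\sqrt[l]{\Delta(m)+j})$, $\rho_j(\mathrm{Frob}_p)$ is the identity, and $\lambda_j(p)=l-1$; by Mertens' theorem in arithmetic progressions these primes contribute $\sum_{p\le y,\ p\equiv1\,(l)}(l-1)/p=\log\log y+\text{O}(1)=\log\log\log D_j+\text{O}(1)$. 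At an unramified prime $p\not\equiv1\pmod{l}$ the Frobenius acts on $\F_l$ as an affine map with non-trivial linear part, hence has exactly one fixed point, so $\lambda_j(p)=0$; thus the primes $p\le y$ with $p\not\equiv1\pmod{l}$ contribute only what the finitely many of them dividing $l(\Delta(m)+j)$ do, namely $\text{o}_k(\log\log x)$ by the hypothesis of the theorem, while the ramified primes $>y$ contribute $\text{o}(1)$. The remaining tail $\sum_{p>y,\ p\equiv1\,(l)}\lambda_j(p)/p$ is controlled by the functional equation and (conjecturally) the Riemann hypothesis for $L(s,\tilde\pi_j)$ provided by Langlands. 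Collecting the pieces gives $\log L(1,\rho_j)\ge\log\log\log D_j+\text{o}(\log\log x)+\text{O}(1)$, hence $L(1,\rho_j)\gg_k\log\log D_j$; by the displayed formula and $R_K=\text{o}(\sqrt{D_j}\log\log D_j)$, the class number $h_{K_{m,j}}$ then tends to infinity as $m$ runs through the admissible values.

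It remains to count. The integers $\Delta(m)$ of Section~\ref{two lemmas} are taken close to perfect $l$-th powers---so that each $\Delta(m)+j$ has the shape $n_j^l+r_j$ with $r_j\mid l n_j^l$ demanded by Hypothesis~\ref{hypothesis 1}---and lying in the residue classes modulo $\prod_{p\le y}p$ imposed above, with $y=\text{o}(\log x)$. Since the $l$-th powers up to $x$ number $\asymp x^{1/l}$, while the congruence conditions cost only a factor $x^{-\text{o}(1)}$ (a product of positive local densities $\prod_{p\le y}\delta_p=x^{-\text{o}(1)}$) and the $l$-th-power-freeness a constant factor, there remain $x^{1/l-\text{o}(1)}$ admissible values with $\Delta(m)\le x$. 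Taking $d=\Delta(m)$ and discarding the finitely many with $\Delta(m)$ too small, we obtain, for every $N$ and all large $x$, at least $x^{1/l-\text{o}(1)}$ integers $1\le d\le x$ with $h_{\Q(\sqrt[l]{d+j})}>N$ for all $j=1,\dots,k$ simultaneously, which is the assertion. The main obstacle is the step in the second paragraph---turning ``many small split primes'' into the genuine lower bound $L(1,\rho_j)\gg_k\log\log D_j$---since it requires controlling $L(s,\tilde\pi_j)$ well beyond the short range $p\le(\log D_j)^{\epsilon}$, which is precisely where the full strength of the Langlands conjecture, rather than merely Artin's conjecture, is needed; a secondary, more technical, point---carried out in Section~\ref{two lemmas}---is to meet the rigid shape $n_j^l+r_j$, the splitting conditions, and the count $x^{1/l-\text{o}(1)}$ all at once for $j=1,\dots,k$.
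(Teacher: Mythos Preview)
Your overall architecture is exactly the paper's: reduce via the class number formula to $L(1,\rho_j)\gg_k\log\log D_j$, force $\lambda_j(p)=l-1$ for all small $p\equiv1\pmod l$ through the congruence construction of Section~\ref{two lemmas}, feed in the hypothesis on the primes $p\not\equiv1\pmod l$, and count the surviving $\Delta(m)$. Your identification of $\Gal(\Q(\omega,\sqrt[l]{a})/\Q)\cong C_l\rtimes C_{l-1}$ and the computation $\lambda_j(p)=0$ for unramified $p\not\equiv1\pmod l$ via the fixed-point count on the affine line are correct (and in fact show that the hypothesis on $\sum_{p\not\equiv1}\lambda(p)/p$ is essentially automatic, up to the ramified primes).

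The genuine gap is in your truncation step. You write $\log L(1,\rho_j)=\sum_p\lambda_j(p)/p+\text{O}_l(1)$ and then assert that the tail over $p>(\log D_j)^{\epsilon}$ is ``controlled by the functional equation and (conjecturally) the Riemann hypothesis for $L(s,\tilde\pi_j)$ provided by Langlands.'' But the Langlands conjecture as stated (\cite[Conjecture~1.8.1]{Bump}) only asserts automorphy; it does \emph{not} include the Riemann hypothesis, nor does automorphy by itself bound $\sum_{p>y}\lambda_j(p)/p$ for an individual $L$-function. As written, your argument therefore assumes GRH in addition to the stated hypotheses, which is strictly stronger than what the theorem allows.

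The paper avoids this by a different mechanism. Automorphy is used not to invoke RH but to feed $L(s,\tilde\pi_{d_K})$ into a \emph{zero-density} estimate for families of automorphic $L$-functions (Proposition~\ref{main prop}, following \cite{Daileda,Dukecubic}): one shows that for all but $\text{O}(x^{1/4})$ of the integers $a\le x$ the function $L(s,\tilde\pi_{d_K})$ is zero-free in $[\tfrac{49}{50},1]\times[-(\log x)^2,(\log x)^2]$, and for every such non-exceptional $a$ one has the short-sum approximation
\[
\log L(1,\tilde\pi_{d_K})=\sum_{p\le(\log d_K)^{\epsilon}}\frac{\lambda(p)}{p}+\text{O}_{\epsilon}(1).
\]
One then discards the exceptional $a$'s from the pool of $\Delta(m)+j$. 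This is the substantive analytic input you were looking for in your final paragraph, and it replaces your appeal to RH; the rest of your argument then goes through essentially verbatim.
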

\begin{remark}
    The hypothesis 
   $$\sum_{ \substack{p\leq (\log D_j)^\epsilon, \\ p\equiv \,1(\text{mod}\, \,l) \\\left( \frac{D_j}{\pi}\right)_l=1} }  \frac{l-1}{p}+ \sum_{ \substack{p\leq (\log D_j)^\epsilon, \\ p\equiv 1 \,(\text{mod}\, \, l) \\\left( \frac{D_j}{\pi}\right)_l=1}}' \   \frac{\lambda(p)}{p}     \gg_k \log \log \log x $$  is not vacuous. For instance,  when  $l=3,$ $\lambda(p) = 0$ for $p\equiv 2 \,(\text{mod}\,\, 3)$ (see \cite[Section 2]{DBDY} and proof of \cite[Lemma 9]{Daileda}). This makes  
   $$\sum_{ \substack{p\leq (\log D_j)^\epsilon, \\ p\equiv \,1(\text{mod}\, \,3) \\\left( \frac{D_j}{\pi}\right)_3=1} }  \frac{2}{p}+ \sum_{ \substack{p\leq (\log D_j)^\epsilon, \\ p\equiv 1 \,(\text{mod}\, \, 3) \\\left( \frac{D_j}{\pi}\right)_3=1}}' \   \frac{\lambda(p)}{p} = \sum_{ \substack{p\leq (\log D_j)^\epsilon , \\ p\equiv \,1(\text{mod}\, \,3) \\\left( \frac{D_j}{\pi}\right)_3=1} }  \frac{2}{p}\gg_k \log  \log \log x.$$
   Let us consider $l=5$. We have  $\lambda(p)=\sum_{i=1}^{4}\alpha_i(p)$ (see equation \eqref{allamb}), where  $\alpha_i(p)$ are $20$-th roots of unity (as  $|\text{Gal}(\Q(e^{\frac{2\pi i}{5}},\sqrt[5]{b})/\Q)|=20$)  or $0.$ 
   Hence  $\lambda(p)=-4c_p$, where $c_p$ is very close to $1$ is not possible. 
  Based on this argument, we suppose that
  $$\sum_{ \substack{p\leq (\log D_j)^\epsilon, \\ p\equiv \,1(\text{mod}\, \,5) \\\left( \frac{D_j}{\pi}\right)_5=1} }  \frac{4}{p}+ \sum_{ \substack{p\leq (\log D_j)^\epsilon, \\ p\equiv 1 \,(\text{mod}\, \, 5) \\\left( \frac{D_j}{\pi}\right)_5=1}}' \   \frac{\lambda(p)}{p} \gg_k \log \log \log x.$$
   
\end{remark}
The structure of the paper is as follows. In Section \ref{Two propositions}, we prove  Proposition \ref{main prop} which  deals with an approximation of $L(1,\tilde{\pi}_{d_K})$ (see equation \eqref{L(s,pi)}). 
In Section \ref{two}, we prove two lemmas leading to the main theorem's proof. The proof of Theorem \ref{main theorem} is covered in Section \ref{proof of main thm}. 
\section{A Proposition}\label{Two propositions}
 Let $F$ be a number field and $E$ be a degree $n$ extension of $F$ with Galois closure $\hat{E}$.  Let   $G=\text{Gal}(\hat{E}/F)$ and  $\pi$ be a finite dimensional complex representation of $G.$ For a prime ideal $\p$ in $F$, let $I_\p $ and $\sigma_\p$ denote the inertia group and Frobenius element for an ideal $\P$ of $\hat{E}$ lying over $\p.$ Let the space corresponding to $\pi$ is $V$ and $V^{I_\p}$ denote the subspace of $V$ fixed by $I_\p.$
We define $L_\p(s,\hat{E}/F,\pi)=\det\left(I - \pi(\sigma_\p)|_{V^{I_\p}}N(\p)^{-s} \right)^{-1}.$
The Artin L-function associated to $\pi$ is given by 
\begin{equation}\label{euler product}
L(s,\hat{E}/F,\pi)=\prod_{\p}L_\p(s,\hat{E}/F,\pi).
\end{equation}
 If the degree of $\pi$ is ${n'},$ then $L_\p(s,\hat{E}/F,\pi)$ takes the form $$\prod_{i=1}^{{n'}}\left( 1- \alpha_i(\p)N(\p)^{-s} \right)^{-1}$$ where $\alpha_i(\p)$ are either $|G|$-th roots of unity (see \cite[Chapter VII (10.1)]{NNEU} or $0 $ and $N(\mathfrak{p})$ denotes the absolute norm of $\p.$ 
 
 In particular, for $F=\Q,$ on expanding the Euler product given by equation \eqref{euler product} as a Dirichlet series, we have 
 \begin{equation}\label{lamdaprime}
L(s,\hat{E}/F,\pi)=\sum_{m=1}^\infty\frac{\lambda'(m)}{m^s}.
 \end{equation}
 For an unramified rational prime $p$ in $\hat{E},$ ${\lambda'}(p)=\text{Tr}(\pi(\sigma_p)).$
 Hence if $p$ splits completely in  $\hat{E}$, then $\sigma_p $ is trivial and
 \begin{equation}\label{complete splitting}
 {\lambda'}(p)={n'}.
 \end{equation}
 We have \begin{equation}\label{allamb}
     \lambda'(p)=\sum_{i=1}^{n'}\alpha_i(p)
 \end{equation}
  for all rational prime $p $ and $|\lambda'(p)|\leq {n'}.$
 
Let  $H=\text{Gal}(\hat{E}/E).$ Action of $G$ on the coset space $G/H$ gives rise to an $n-$dimensional complex (permutation) representation $\rho$ of $G.$ This representation is induced from the trivial representation of $H$ which implies 
\begin{equation}\label{DeArt}
    L(s,\hat{E}/F,\rho)=L(s,\hat{E}/E,1_H)=\zeta_E(s).
\end{equation}
Let us take $K=\Q(\sqrt[l]{a})$, where $a\in \mathbb{N}$ is $l-$th powerfree, and $d_K$ to be the absolute value of the discriminant of $K$ (see \cite[Theorem 1.1]{JhSa} for an expression for $d_K$).
Let $L$ be the Galois closure of $K$. Then $L=\Q(\omega,\sqrt[l]{a}),$ where $\omega$ is a primitive $l$-th root of unity. Using \cite[Chapter II(9), Problems 4(d)]{PM}, we see that $\text{Gal}(L/\Q)$ is isomorphic to a group of order $(l-1)\times l.$
On keeping equation \eqref{DeArt} in mind and following \cite[equation (20)]{Daileda}, we see that $$\zeta_{K}(s)=\zeta(s) L(s,L/\Q,\pi),$$ where $\rho\cong  1\oplus \pi$ with $\pi$ being a $(l-1)$ dimensional representation of $\text{Gal}(L/\Q)$. 
    By Langlands conjecture (\cite[Conjecture 1.8.1]{Bump}), there is an automorphic representation $\tilde{\pi}_{d_K}$ with conductor $d_K$ such that $ L(s,L/\Q,\pi)=L(s,L/\Q , \tilde{\pi}_{d_K})$(see  \cite[Lemma 7]{Daileda} also). 
The Artin L-function $L(s,L/\Q , \tilde{\pi}_{d_K})$ has a Dirichlet series given by 
\begin{equation}\label{L(s,pi)..}
L(s,L/\Q , \tilde{\pi}_{d_K})=\sum_{m=1}^\infty \frac{\lambda(m)}{m^s}.
\end{equation}
Note that we have taken $\hat{E}=\Q(\omega,\sqrt[l]{a})$, $F=\Q$, $\pi=\tilde{\pi}_{d_K}$ and $\lambda'=\lambda$ in equation \eqref{lamdaprime}.
The following approximation of $L(1,\tilde{\pi}_{d_K}):=L(1,L/\Q,\tilde{\pi}_{d_K})$ is useful for the main theorem. 
\begin{Proposition}\label{main prop}
 Let $a$ be a positive $l$-th power-free integer less than or equal to x. Let $K=\Q(\sqrt[l]{a})$ be a pure prime degree field and  $d_K$ be the absolute value of the discriminant of $K$ . Then for any $0<\epsilon<1,
 $
 \begin{equation}\label{L(s,pi)}
 \log L(1,\tilde{\pi}_{d_K})=\sum_{p\leq (\log d_K)^\epsilon} \frac{\lambda(p)}{p}+\text{O}_\epsilon(1),
 \end{equation}
 with at most $\text{O}(x^{\frac{1}{4}})$ exceptions $a\leq x.$ 
\end{Proposition}
\begin{proof}
    Let $Q=x$ and $S(Q)=\{\tilde{\pi}_{d_K}: a\leq x\},$ then by Langlands conjecture, $Q\ll|S(Q)|\ll Q^{1.1}.$  For $\sigma>\frac{1}{2}$ and $T>0,$ let $N(\sigma,T,\tilde{\pi}_{d_K})$ denote the number of zeroes of $L(s,\tilde{\pi}_{d_K})$ in the rectangle $[\sigma,1]\times[-T,T].$ Theorem analogue to \cite[Theorem 5]{Daileda} and \cite[Corollary 3.3]{Daileda} can be obtained for $S(Q). $  By taking $e=1, d=1.1$ and $\sigma=\frac{49}{50}$ in these obtained analogues, we get $(4d+6)(1-\sigma)<\frac{1}{4}$ and  
    \begin{equation}\label{x1/4}
    \sum_{\tilde{\pi}_{d_K}\in S(Q)} N(\sigma,T,\tilde{\pi}_{d_K})\ll Q^{\frac{1}{4}}.
    \end{equation}

    Suppose that,  some $\tilde{\pi'}_{d_K}\in S(Q)$ has more than or equal $\text{O}(x^{\frac{1}{4}+\epsilon'})$ zeroes in $[\sigma,1]\times[-T,T] $ for some $\epsilon'>0,$ then 
    $$\sum_{\tilde{\pi}_{d_K}\in S(Q)} N(\sigma,T,\tilde{\pi}_{d_K})\geq N\left(\sigma,T,\tilde{\pi'}_{d_K}\right)\gg x^{\frac{1}{4}+\epsilon},$$ contradicting equation \eqref{x1/4}.
    Thus for all $\tilde{\pi}_{d_K}\in S(Q)$ with atmost $\text{O}(x^{\frac{1}{4}})$ exception, $L(s,\tilde{\pi}_{d_K})$  is free from zeroes in the rectangle $[\sigma,1]\times[-(\log Q)^2,(\log Q)^2].$ Now using \cite[Proposition 2]{Daileda}, for $0<\epsilon<1\leq \frac{112}{7(1-\sigma)},$ 
    $$
 \log L(s,\tilde{\pi}_{d_K})=\sum_{p\leq (\log d_k)^\epsilon} \frac{\lambda(p)}{p}+\text{O}_\epsilon(1),
 $$
 with at most $\text{O}(x^{\frac{1}{4}})$ exceptions $a \leq x.$  
\end{proof}
\begin{remark}
    Above proposition is a variant of \cite[Proposition 2]{DBDY}, \cite[Proposition 5]{Dukecubic}.  
\end{remark}
\subsection{Upper bound for the regulator of $\mathbb{Q}\left(\sqrt[l]{a}\right)$}\label{sec 2.1}
Let $L'$ be a number field with $[L':\Q]=n.$ Let $\sigma_1, \dots , \sigma_{r_1} $ denote the real embeddings of $L'.$ Let $\sigma_{r_1+1}, \dots, \sigma_{r_1+r_2}$ denote all  pairwise  complex non-conjugate embeddings  of $L'.$ We have $r_1+2r_2=n$ and let $r_1+r_2-1=r'.$  Let $u_1, \dots, u_{r'}$ be  a fundamental system of generators for the unit group of $L$ modulo roots of unity.  
Given $x \in L'^\times, $ define
$$l_{i'}(x) = \begin{cases}
            |\sigma_{i'}(x)|, \text{ if  $1\leq i'\leq r_1$}\\
            |\sigma_{i'}(x)|^2, \text{ if $r_1+1\leq i'\leq r_1+r_2$}.
           \end{cases}
           $$
Consider the  matrix $A_{L'}=(a_{i',j})$ of order $(r+1)\times r,$ where $a_{i',j}=\log( l_{i'}(u_j)).$  The determinant of the matrix obtained by deleting any one of the row  of $A$ is defined to the regulator of $L'$ and denoted by $R_{L'}.$ 

One of the key ingredients of \cite[Proposition 2.2]{DBDY}, which gives us a sharp bound for the regulator of the field of the form $\Q\left(\sqrt[3]{n^3+r} \right)$ with $n,r\in \mathbb{N},$
 is the existence of units of certain form. To be precise, we see that in the proof of \cite[Proposition 2.2]{DBDY} 
uses the following fact: If $r|3n^3$, then $\frac{r}{(\omega-n)^3}$ is a unit in $\Q\left(\sqrt[3]{n^3+r}\right),$ where $\omega=\sqrt[3]{n^3+r}.$ Since $r_1+r_2-1=1$ for both the cases $\Q\left(\sqrt{n^2+r} \right)$ and $\Q\left(\sqrt[3]{n^3+r}\right)$, it is sufficient to find a unit $u$ in each field and bound the regulator (which is asymptotic to $\log u$) by bounding the unit.  For our case of $\Q\left(\sqrt[l]{n^l+r}\right),$ there are $\frac{l-1}{2}$ many fundamental units, all of which can not be written explicitly, making it difficult to get a sharp upper bound for the regulator like \cite[Proposition 2.2]{DBDY}. 
We do, however, have an explicit value of a unit for the field $\Q\left(\sqrt[l]{n^l+r}\right).$  Since $l$ is a prime number and  $r|ln^{l-1},$ from \cite{Stender}  we see that $u=\frac{r}{(\tilde{\omega}-n)^l}$ is a unit in  $\Q\left(\sqrt[l]{n^l+r}\right)$ where $\tilde{\omega}=\sqrt[l]{n^l+r}.$ Following along \cite[Proposition 2.2]{DBDY}, we see that  $u=\text{O}\left((n^l+r)^{l-1}\right).$
We do believe that the other remaining $\frac{l-3}{2}$ units could hopefully be shown to  $\text{O}\left((n^l+r)^{l-1}\right),$ which would help to get a sharp bound of $\text{O}(n^l+r)$ (an analogue of \cite[Proposition 2.2]{DBDY}) for $R_{\Q\left(\sqrt[l]{n^l+r}\right)}$ and remove Hypothesis \ref{hypothesis 1}.
However, at this point,  we do not have much information about other $\frac{l-3}{2}$ units; so we choose to 
state the best known, i.e. Landau's bound (see \cite{Siegel}) on regulators instead. 
\begin{Proposition}[Landau]\label{prop 2}
    Let $l$ be a prime number and $n, r$ be  positive integers. Let $R_{\Q\left(\sqrt[l]{n^l+r}\right)}$ denote the regulator for the pure cubic field $\Q\left(\sqrt[l]{n^l+r}\right)$ and $D_{\Q\left(\sqrt[l]{n^l+r}\right)}$ denote the absolute value of its discriminant.  Then 
    $$
 R_{\Q\left(\sqrt[l]{n^l+r}\right)}=\text{O}\left(\sqrt{D_{\Q\left(\sqrt[l]{n^l+r}\right)}}\log^{l-1}D_{\Q\left(\sqrt[l]{n^l+r}\right)}\right).
    $$
\end{Proposition}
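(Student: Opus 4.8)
The plan is to obtain the bound directly from the analytic class number formula, using only two crude inputs: the trivial estimate $h_K\ge 1$ for the class number, and the classical fact that the residue at $s=1$ of the Dedekind zeta function of a number field of degree $n$ is $\text{O}_n\!\left((\log D)^{n-1}\right)$ in its discriminant $D$. No arithmetic feature of the family $\Q\left(\sqrt[l]{n^l+r}\right)$ — in particular, not the explicit unit $u=\frac{r}{(\tilde{\omega}-n)^l}$ mentioned above — is used. Write $K=\Q\left(\sqrt[l]{n^l+r}\right)$, a field of degree $l$ over $\Q$, and abbreviate $D_K=D_{\Q\left(\sqrt[l]{n^l+r}\right)}$ and $R_K=R_{\Q\left(\sqrt[l]{n^l+r}\right)}$. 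Since the $l$-th root is taken to be real, $K$ has a real embedding, so $w_K=2$; moreover the signature $(r_1,r_2)$ of $K$ (with $r_1+2r_2=l$) depends only on $l$ — it is $(1,(l-1)/2)$ for $l$ odd and $(2,0)$ for $l=2$ — so the constant $c_l:=w_K\,2^{-r_1}(2\pi)^{-r_2}$ depends only on $l$.

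First I would invoke the class number formula
$$\mathrm{Res}_{s=1}\zeta_K(s)=\frac{2^{r_1}(2\pi)^{r_2}\,h_K R_K}{w_K\sqrt{D_K}},$$
solve it for $R_K$, and bound $h_K$ below by $1$:
$$R_K=\frac{c_l\,\sqrt{D_K}}{h_K}\,\mathrm{Res}_{s=1}\zeta_K(s)\;\le\; c_l\,\sqrt{D_K}\,\mathrm{Res}_{s=1}\zeta_K(s).$$
It then remains to bound the residue, for which I would quote Landau's estimate (cf.\ \cite{Siegel}): for every number field $L'$ of degree $n$,
$$\mathrm{Res}_{s=1}\zeta_{L'}(s)=\text{O}_n\!\left((\log D_{L'})^{\,n-1}\right),$$
with an implied constant depending only on $n$. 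This can be derived either by counting the ideals of $L'$ of norm at most $X$, or by estimating $\zeta_{L'}(s)$ slightly to the right of $s=1$ (say at $s=1+1/\log D_{L'}$), comparing its Dirichlet coefficients with those of $\zeta(s)^n$ and noting that the ramified Euler factors contribute only $D_{L'}^{\,\text{o}(1)}$. Taking $L'=K$ and $n=l$ and substituting into the previous display gives
$$R_K=\text{O}\!\left(\sqrt{D_K}\,(\log D_K)^{\,l-1}\right),$$
which is the asserted inequality (the proposition writes $\log^{l-1}D_K$ for $(\log D_K)^{l-1}$).

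The only point that needs a little care is the \emph{uniformity} of the implied constant: it must depend only on $l$, and not on $n$ or $r$ (equivalently, not on $a=n^l+r$). This is automatic here, since Landau's residue bound is uniform over all number fields of a fixed degree, $K$ has degree $l$ for every admissible pair $(n,r)$, and the leftover factor $c_l$ is a function of $l$ alone. Beyond this bookkeeping there is no real obstacle — the proposition is just the specialization of Landau's regulator bound to pure fields of prime degree. It is worth recording in the write-up, as the surrounding discussion already stresses, that precisely because this argument exploits nothing about the rank-$\frac{l-1}{2}$ unit lattice of $K$ (only its covolume, via the zeta residue), the resulting bound is weaker than the one needed for Theorem \ref{main theorem}, which is why Hypothesis \ref{hypothesis 1} must be imposed separately.
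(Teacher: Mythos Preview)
Your argument is correct and is exactly the standard derivation of Landau's bound: combine the analytic class number formula with $h_K\ge 1$ and Landau's upper bound $\mathrm{Res}_{s=1}\zeta_K(s)=\text{O}_l\big((\log D_K)^{l-1}\big)$, noting that the signature and $w_K$ depend only on $l$. There is nothing to fault here.

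For comparison, the paper does not actually give a proof of this proposition at all: it is stated as Landau's classical bound with a reference to \cite{Siegel}, precisely because (as the surrounding text explains) for $l>3$ the authors cannot produce enough explicit units to improve on it. Your write-up therefore supplies the missing justification and, helpfully, makes explicit why the implied constant is uniform in $n,r$ and why the bound is too weak for Theorem~\ref{main theorem}, which is the reason Hypothesis~\ref{hypothesis 1} is invoked.
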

\begin{remark}
   We want the reader to keep Hypothesis \ref{hypothesis 1} in mind, which we will use to derive Theorem \ref{main theorem}. Proving the hypothesis \ref{hypothesis 1} or any sharper version of it might be of interest to the reader.  On a similar note, one might try to get a sharper bound than Landau's bound for some class of number fields.
\end{remark}
\begin{remark}[Comparing regulator bounds for $l=2$,$3$]
     For the case $l=2$ and $3,$ the regulator is $\text{O}(\log D_j)$. However, while dealing with the case $l>3,$ we are unable to find a sharp upper bound for the regulator of fields of the form $\Q(\sqrt[l]{a^l+j}),$ as explained earlier in this subsection. The Landau bound does not help, forcing us to invoke Hypothesis \ref{hypothesis 1}. In the upcoming discussion in this subsection, we provide some computational evidence for the validity of  Hypothesis \ref{hypothesis 1} for  $l=5.$
\end{remark}

\subsection{Computational evidence}\label{comevi}
In the following table, we record some examples supporting Hypothesis \ref{hypothesis 1}.       Let $\lceil x\rceil$ denote the celiling function evaluated at $x.$ Let $\tau(x)$  denotes the number of divisors of $x.$  Given $n\in \N,$ there are $\tau(ln^l)$ many $a$ for which one has to check whether Hypothesis \ref{hypothesis 1} holds true. Computing the regulator for a number field is a challenging task, even for modern-day computers. Hence, we stick to $l=5.$ One needs to check for regulators for $\tau(5n^5)$ many fields given by $\Q(\sqrt[5]{a})$, where $a=n^5+r$ and $r|5n^5.$

First we check for $n=1$ to $5$ and compute the ratio $\alpha(a):=\frac{\left\lceil R_{\Q\left(\sqrt[5]{a}\right)}\right\rceil}{\left\lfloor \sqrt{D_{\Q\left(\sqrt[5]{a}\right)}} \right\rfloor} $ where $a$ is as per the last paragraph.  We need to check the ratio for a total of $55$ fields. It took more than $3$ hours in a PC with specification given by \textbf{11th Gen Intel(R) Core(TM) i9-11900 @ 2.50GHz (16 CPUs)} using SAGE  to check and verify that the ratio is less than $1$ for these $55$ fields. It took more than two days to verify the hypothesis \ref{hypothesis 1} for $n=8$. It becomes practically impossible to check for, say all $n $ from $6$ to $100.$
Due to an explicit expression for discriminant \cite{JhSa}, computing discriminant is easy for  $n=6$ to $100.$ Thus the challenge is to primarily compute the regulator, for which  
SAGE implemented efficient algorithm seems to be missing in the literature. 
In the following tables, we record some examples obtained in  computation.
\begin{table}[ht]
\caption{$n=5$}
\begin{center}
\begin{tabular}{|c|c|c|c|}  \hline
  Field & $\lceil \text{Regulator}\rceil$  &   $\left\lfloor\sqrt{\text{Discriminant}}\right\rfloor$  &  $    \lceil \log_{10} \alpha(a) \rceil$ \\
  \hline
 $\Q(\sqrt[5]{6} ) $ &   $2$      & $ 2012$   &      $ -1$  \\
  $\Q(\sqrt[5]{10} ) $ &   $195$      & $  5590$   &      $ -1$  \\
  $\Q(\sqrt[5]{ 30} ) $ &   $ 1348$      & $  50311$   &      $ -1$  \\
  $\Q(\sqrt[5]{  130} ) $ &   $24127$      & $ 944738$   &      $ -1$  \\
  $\Q(\sqrt[5]{  630} ) $ &   $ 10449$      & $ 2465264$   &      $ -2$  \\
  $\Q(\sqrt[5]{ 15630} ) $ &   $  127544113$      & $ 13656611877$   &      $ -2$  \\
 
    \hline
\end{tabular}
\end{center}
\end{table}
\begin{table}[ht]
\caption{$n=8$}
\begin{center}
\begin{tabular}{|c|c|c|c|c|c|c|c|}  \hline
  Field & $\lceil \text{Regulator}\rceil$  &   $\left\lfloor\sqrt{\text{Disc.}}\right\rfloor$  &  $    \lceil \log_{10} \alpha(a) \rceil$&  Field & $\lceil \text{Regulator}\rceil$  &   $\left\lfloor\sqrt{\text{Disc.}}\right\rfloor$  &  $    \lceil \log_{10} \alpha(a) \rceil$ \\
  \hline
 $\Q(\sqrt[5]{9 } ) $ &   $13 $      & $ 503 $   &      $ -1 $ &  $\Q(\sqrt[5]{ 10} ) $ &   $ 195$      & $ 5590 $   &      $-1  $   \\
  $\Q(\sqrt[5]{12 } ) $ &   $67$      & $ 2012 $   &      $  -1$ & $\Q(\sqrt[5]{13 } ) $ &   $59 $      & $ 9447 $   &      $ -2 $  \\
  $\Q(\sqrt[5]{16  } ) $ &   $ 5 $      & $ 223  $   &      $ -1 $ & $\Q(\sqrt[5]{18 } ) $ &   $ 11$      & $ 402 $   &      $-1  $  \\
  $\Q(\sqrt[5]{  24 } ) $ &   $ 11$      & $402 $   &      $ -1 $& $\Q(\sqrt[5]{28 } ) $ &   $236 $      & $ 10957 $   &      $ -1 $   \\
  $\Q(\sqrt[5]{ 40 } ) $ &   $73  $      & $ 5590$   &      $ -1 $ & $\Q(\sqrt[5]{48 } ) $ &   $50 $      & $2012  $   &      $ -1 $  \\
  $\Q(\sqrt[5]{ 72  } ) $ &   $ 50  $      & $ 2012 $   &      $-1 $ & $\Q(\sqrt[5]{88 } ) $ &   $32 $     & $27056  $   &      $ -2 $  \\
  $\Q(\sqrt[5]{ 136  } ) $ &   $1444   $      & $ 64622 $   &      $ -1$ & $\Q(\sqrt[5]{168 } ) $ &   $566 $     & $ 19722 $   &      $-1  $  \\
    $\Q(\sqrt[5]{ 264  } ) $ &   $1264   $      & $ 243507 $   &      $-2 $ & $\Q(\sqrt[5]{ 328} ) $ &   $ 831$     & $ 375883 $   &      $ -2 $  \\
      $\Q(\sqrt[5]{ 520  } ) $ &   $  4449 $      & $ 944738 $   &      $ -2$ & $\Q(\sqrt[5]{ 648} ) $ &   $67 $     & $2012  $   &      $ -1 $  \\
        $\Q(\sqrt[5]{ 1032  } ) $ &   $ 19717  $      & $ 744209 $   &      $-1 $ & $\Q(\sqrt[5]{1288 } ) $ &   $38249 $     & $ 5796111 $   &      $ -2 $  \\
          $\Q(\sqrt[5]{ 2056  } ) $ &   $ 551977  $      & $14769005  $   &      $ -1$ & $\Q(\sqrt[5]{2568 } ) $ &   $ 155545$     & $ 4608133 $   &      $-1  $  \\
            $\Q(\sqrt[5]{ 4104  } ) $ &   $ 989  $      & $ 726498 $   &      $ -2$ & $\Q(\sqrt[5]{ 5128} ) $ &   $ 22156$     & $ 91875784 $   &      $  -3$  \\
              $\Q(\sqrt[5]{ 8200  } ) $ &   $ 40917  $      & $ 9397075 $   &      $-2 $ & $\Q(\sqrt[5]{10248 } ) $ &   $ 324012$     & $ 366930034 $   &      $ -3 $  \\
                $\Q(\sqrt[5]{ 16392  } ) $ &   $  193082 $      & $  938791003$   &      $-3 $ & $\Q(\sqrt[5]{20488 } ) $ &   $ 6464945$     & $ 1466574600 $   &      $ -2 $  \\
                  $\Q(\sqrt[5]{ 32776  } ) $ &   $ 103219  $      & $750666311  $   &      $-3 $ & $\Q(\sqrt[5]{ 40968} ) $ &   $214882 $     & $ 130311288 $   &      $ -2 $  \\
                    $\Q(\sqrt[5]{ 81928  } ) $ &   $  579552 $      & $478601058  $   &      $ -2$ & $\Q(\sqrt[5]{163848 } ) $ &   $ 450677433$     & $ 93796647781 $   &      $  -2$  \\
 
    \hline
\end{tabular}
\end{center}
\end{table}
 \section{Two  lemmas}\label{two}
In this section, we prove two main lemmas analogous to Lemma 3.1 and Lemma 3.2 in \cite{DBDY}. 
These two lemmas are
crucial to prove Theorem \ref{main theorem}. 
First, we consider some notation involving $k,l.$. Recall that $ l$ is a fixed odd prime number greater than or equal to $3$. 
Let $k$ be a fixed natural number. Let $P=\text{lcm}(1,\, \dots \, , k)$, $\Delta(m)=(mP)^l,$  and $P_j=\frac{P}{j}.$  If  $\Delta(m)\leq x,$  then $m\leq \frac{x^{1/l}}{P}.$  Let  $M=\frac{x^{1/l}}{P}$ and $$q=\prod_{k<p\leq (\log M)^\epsilon} p,$$ where $p$ is a rational prime. 
\begin{lemma}\label{lemma a}
    Let $0< \epsilon <1$ be given and $D_j$ be the absolute value of the discriminant of the
pure field $\mathbb{Q}\left(\sqrt[l]{\Delta(m)+j} \right)$. Then for all $j=1,\dots, k, $ there exists $m_0 ( \text{mod} \,\,q)$ such that if $m \equiv m_0$ (mod
$q)$, then for all primes $p \equiv 1 (\text{mod}$ $l)$ with $l^{2k} (lk + 1)^2 < p \leq  (\log M)^\epsilon,$  we have
$\left( \frac{D_j}{\pi} \right)_l=1$
where $p = \pi \bar{\pi}$ in $Q( \sqrt{-l })$ and $(\frac{\cdot}{\cdot})_l$ is the $l$th power residue  symbol (see \cite[Chapter VI, Exercise 9]{FrTa}). Moreover, we can
take $m_0\equiv 0 $(mod $p)$ for all primes $p$ with $k <p \leq  l^{2k} (lk + 1)^2$.
\end{lemma}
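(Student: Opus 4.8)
The plan is to follow the strategy of \cite[Lemma 3.1]{DBDY} closely, building the residue class $m_0 \pmod q$ by the Chinese Remainder Theorem one prime at a time, so that the discriminants $D_j$ of all $k$ consecutive fields become $l$-th power residues at every relevant split prime simultaneously. First I would recall the explicit formula for the discriminant of a pure prime-degree field $\Q(\sqrt[l]{a})$ from \cite[Theorem 1.1]{JhSa}: writing $a = a_1 a_2^{l-1}$ in its $l$-th power-free form, $D_j$ is (up to sign and a power of $l$ controlled by whether $a \equiv 1 \pmod{l^2}$ or not) equal to $l^{\,c}(a_1 a_2)^{l-1}$ for an explicit $c \in \{0, l-1, \ldots\}$. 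The point is that for primes $p > l$, the $p$-part of $D_j$ is governed by the $p$-part of $\Delta(m)+j$, and since $\left(\tfrac{\cdot}{\pi}\right)_l$ is multiplicative and trivial on $l$-th powers, controlling $\left(\tfrac{\Delta(m)+j}{\pi}\right)_l$ controls $\left(\tfrac{D_j}{\pi}\right)_l$ for all $p$ with $k < p \leq (\log M)^\epsilon$ outside the bounded bad range.

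Next I would set up the prime-by-prime construction. For each split prime $p = \pi\bar\pi$ in $\Q(\sqrt{-l})$ in the range $l^{2k}(lk+1)^2 < p \leq (\log M)^\epsilon$, the residue symbol $\left(\tfrac{n}{\pi}\right)_l$ as a function of $n \bmod p$ is a character of order $l$ on $(\Z/p)^\times$; I want to choose $m \bmod p$ so that $\Delta(m)+j = (mP)^l + j$ is a nonzero $l$-th power residue mod $p$ for every $j = 1, \ldots, k$. Since $P$ is coprime to $p$ (as $p > k \geq$ every prime factor of $P$), as $m$ ranges over $\Z/p$ the value $(mP)^l$ ranges over all $l$-th powers mod $p$, i.e.\ over a subgroup of index $l$; we need a single value $t = (mP)^l$ in that subgroup with $t + j$ a nonzero $l$-th-power residue for all $j \leq k$ simultaneously. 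This is a counting/character-sum argument: the number of $t$ in the $l$-th power subgroup with all $t+1, \ldots, t+k$ nonzero $l$-th powers is $\frac{p}{l^{k+1}} + O_k(\sqrt p)$ by Weil bounds on the associated multiplicative character sums, which is positive once $p > l^{2k}(lk+1)^2$ (this is exactly where that threshold comes from — it makes the main term beat the error term). For the finitely many primes $p$ with $k < p \leq l^{2k}(lk+1)^2$ I just set $m_0 \equiv 0 \pmod p$, which makes $\Delta(m) \equiv 0$ and contributes nothing problematic since these primes are excluded from the conclusion's range. Finally, CRT over all these primes (the $p \leq l^{2k}(lk+1)^2$ ones and the split ones in the window) produces $m_0 \bmod q$, and $q = \prod_{k < p \leq (\log M)^\epsilon} p$ is exactly the product over this set of moduli, so the construction is consistent.

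The step I expect to be the main obstacle is the bookkeeping connecting $\left(\tfrac{D_j}{\pi}\right)_l$ to $\left(\tfrac{\Delta(m)+j}{\pi}\right)_l$ cleanly: one must verify that passing from $\Delta(m)+j$ to its $l$-th-power-free kernel $a_1 a_2$, and then to $D_j = l^c (a_1 a_2)^{l-1}$, does not disturb the residue symbol at the primes $p$ in the target range — using that $l \neq p$ so $\left(\tfrac{l}{\pi}\right)_l^c$ is irrelevant to whether the symbol equals $1$ only if we are careful, and using that $(a_1a_2)^{l-1} = (a_1a_2)^{-1}\cdot(a_1a_2)^l$ so $\left(\tfrac{D_j}{\pi}\right)_l = \left(\tfrac{a_1 a_2}{\pi}\right)_l^{-1}$, while $\left(\tfrac{\Delta(m)+j}{\pi}\right)_l = \left(\tfrac{a_1 a_2^{l-1}}{\pi}\right)_l = \left(\tfrac{a_1 a_2}{\pi}\right)_l^{-1}$ as well. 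So in fact $\left(\tfrac{D_j}{\pi}\right)_l = \left(\tfrac{\Delta(m)+j}{\pi}\right)_l$ up to the harmless factor $\left(\tfrac{l}{\pi}\right)_l^{c}$, and one handles the $l$-factor either by noting $c$ is a fixed multiple of $l-1$ so the factor is an $l$-th power hence $1$, or by adjoining a congruence condition mod a fixed power of $l$ into $m_0$ (which is permissible since that modulus is bounded). The other delicate point is ensuring $\Delta(m)+j \not\equiv 0 \pmod p$ for the relevant $p$, i.e.\ that we land on a genuinely nonzero residue; this is automatic from the Weil-bound count since we only counted $t$ with $t+j \neq 0$. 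Once these are pinned down, the rest is the standard CRT assembly, and I would close by remarking that the threshold $l^{2k}(lk+1)^2$ and the shape of $q$ are inherited verbatim from the cubic case in \cite{DBDY} with $3$ replaced by $l$.
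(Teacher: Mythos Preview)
Your core strategy---counting residues $m_0\bmod p$ via Weil bounds on the character sums $\sum_n\bigl(\frac{\prod_j(n+j)^{r_j}}{\pi}\bigr)_l$ and gluing by the Chinese Remainder Theorem---is exactly the paper's proof, down to the threshold $p>l^{2k}(lk+1)^2$ that makes the main term $p/l^{k+1}$ dominate the $O(k\sqrt p)$ error, and the choice $m_0\equiv 0\pmod p$ on the small primes.

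Where you go beyond the paper is in the reduction from $\bigl(\frac{\Delta(m)+j}{\pi}\bigr)_l$ to $\bigl(\frac{D_j}{\pi}\bigr)_l$; the paper's proof simply establishes the former (its equation~\eqref{equation 1 lemma a}) and stops, tacitly identifying it with the stated conclusion about $D_j$. Your attempted bridge contains two slips. First, $\bigl(\frac{a_1 a_2^{\,l-1}}{\pi}\bigr)_l=\bigl(\frac{a_1}{\pi}\bigr)_l\bigl(\frac{a_2}{\pi}\bigr)_l^{-1}$, which equals $\bigl(\frac{a_1 a_2}{\pi}\bigr)_l^{-1}$ only when $\bigl(\frac{a_1}{\pi}\bigr)_l^2=1$, i.e.\ (for odd $l$) when $\bigl(\frac{a_1}{\pi}\bigr)_l=1$ already; so the two symbols you equate do not agree in general. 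Second, by the discriminant formula quoted in the proof of Lemma~\ref{lemma b} (from \cite{JhSa}) the exponent $c$ on $l$ in $D_j$ is $l-2$ or $l$, neither a multiple of $l-1$, so $\bigl(\frac{l}{\pi}\bigr)_l^{\,c}$ need not be trivial. That said, the only downstream use of the lemma in the proof of Theorem~\ref{main theorem} is the complete splitting of $p$ in $\Q(\sqrt[l]{\Delta(m)+j})$, and that is governed precisely by $\bigl(\frac{\Delta(m)+j}{\pi}\bigr)_l=1$; so your Weil/CRT argument already delivers what is actually needed, matching the paper's implicit content.
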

\begin{proof}
Let $p$ be a prime $\equiv 1 (\text{mod}$ $l)$ with $k< p\leq (\log M)^\epsilon$. Note that $P$ is invertible modulo $p>k.$  Hence there are $l$ times as many nonzero classes of $m_0$ (mod $p$) satisfying 
\begin{equation}\label{equation 1 lemma a}
    \left( \frac{(m_0P)^l+1}{\pi} \right)_l=\left( \frac{(m_0P)^l+2}{\pi} \right)_l=\dots =\left( \frac{(m_0P)^l+k}{\pi} \right)_l=1
\end{equation}
as the number of $n\equiv (m_oP)^l$ modulo $p$ with 
\begin{equation}\label{equation 2 lemma a}
    \left( \frac{n}{\pi} \right)_l=\left( \frac{n+1}{\pi} \right)_l=\dots =\left( \frac{n+k}{\pi} \right)_l=1.
\end{equation}
For $j=0, \dots, k, $ let $r_j \in \{0,1, \dots, l-1\},$ and consider the polynomial $$
Q_{(r_0,\dots, r_k)}(X)=\prod_{j=0}^k(X+j)^{r_j}.
$$  On applying \cite[Theorem 11.23]{IK} for each $(r_0,\dots, r_k)\neq (0,\dots ,0),$ for the sum 
$$
S_{(r_0,\dots, r_k)} =\sum_{n=1}^p \left( \frac{Q_{(r_0,\dots, r_k)}(n) }{\pi}\right)_l, 
$$ we obtain $|S_{(r_0,\dots, r_k)}|\leq k \sqrt{p}.$ Hence the number of solutions modulo $p$ to the equation \eqref{equation 2 lemma a} is 
\begin{align*}
&\frac{1}{l^{k+1}} \sum_{n=1}^{p-k-1}\sum_{j=0}^{k} \left(  1+ \left( \frac{n+j}{\pi} \right)_l  +\left( \frac{n+j}{\pi} \right)^2_l +\dots \left( \frac{n+j}{\pi} \right)^{l-1}_l  \right)
\\
&=\frac{1}{l^{k+1}} \sum_{n=1}^{p}\sum_{j=0}^{k} \sum_{t=0}^{l-1} \left( \frac{n+j}{\pi} \right)^t_l+O(k+1)\\
&=\frac{p}{l^{k+1}} +\frac{1}{l^{k+1}}   \sum_{(r_0,\dots, r_k)\neq (0,\dots, 0)} S_{(r_0,\dots, r_k)}+O(k+1)\\
&=\frac{p}{l^{k+1}} +O\left(k\sqrt{p}+k+1 \right).
\end{align*}
Therefore there is atleast one $m_0$  (mod $p$) satisfying equation \eqref{equation 1 lemma a} provided $ p \geq  l^{2k} (lk + 1)^2$. Using the Chinese remainder theorem,  we can obtain at least one
residue class (mod $q$). 
\end{proof}
\begin{lemma}\label{lemma b}
    Let $m_0$(mod $q)$ be as given by Lemma \ref{lemma a}. For $M^{1-o(1)}$ integers with $1 \leq m \leq 
M$ and  $m \equiv m_0$ (mod
$q)$, we have
$$
D_j\gg_k
\Delta(m),$$
for all  $j=1,\dots, k$.
\end{lemma}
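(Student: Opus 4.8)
The plan is to show that, after fixing the residue class $m\equiv m_0\pmod q$ from Lemma \ref{lemma a}, the discriminant $D_j$ of $\Q\left(\sqrt[l]{\Delta(m)+j}\right)$ is comparable to $\Delta(m)=(mP)^l$ for all but a negligible proportion of $m\le M$. The key point is that $D_j$ can only fall much below its "generic" size $\asymp \Delta(m)$ when $\Delta(m)+j$ has a large repeated prime factor, since for a pure field $\Q(\sqrt[l]{a})$ with $a$ written in $l$th-power-free form $a = a_1 a_2^{l-1}\cdots$ (or simply $a = b c^l$ with $b$ the $l$th-power-free part), the discriminant is, up to powers of $l$, essentially $(l\,b)^{l-1}$ or $b^{l-1}$ times a bounded factor — in particular $D_j \gg a / (\text{square-ish divisor})^{\,l-1}$ type lower bounds hold. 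Concretely, I would invoke the explicit discriminant formula for pure fields $\Q(\sqrt[l]{a})$ (the reference \cite{JhSa} cited in Section \ref{Two propositions}): if $m^l \| a$ is the maximal $l$th power dividing $a$ and $a = m^l a_0$ with $a_0$ $l$th-power-free, then $D_{\Q(\sqrt[l]{a})} = l^{c}\, a_0^{\,l-1}$ for some bounded exponent $c\in\{0,\dots\}$ depending only on $l$ and congruence conditions. Thus $D_j$ is small only if $\Delta(m)+j$ has a "large $l$th-power part."

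The main step, then, is a counting argument: I would bound the number of $m\le M$ with $m\equiv m_0\pmod q$ for which $\Delta(m)+j = (mP)^l + j$ is divisible by $t^l$ for some $t$ with $t^{l-1}$ exceeding, say, $\Delta(m)^{\delta}$ (equivalently $t > (mP)^{l\delta/(l-1)}$), for some fixed small $\delta>0$. For a fixed modulus $t$, the congruence $(mP)^l \equiv -j \pmod{t^l}$ has at most $O_l(t^{l-1}\cdot \text{(bounded)})$ — more carefully, at most $l^{\omega(t)}$ times a small factor — solutions $m$ in any interval of length $t^l$; summing over the relevant range of $t$ gives a count that is $o(M)$ provided we also handle the divisor/gcd subtleties (the factor $\gcd(P, t)$ and the multiplicity from $l^{\omega(t)}$, which is absorbed since $l^{\omega(t)} \ll_\varepsilon t^{\varepsilon}$). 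Combining over the finitely many $j=1,\dots,k$ costs only a factor $k$. Hence for $M - o(M) = M^{1-o(1)}$ values of $m$ in the class $m_0\pmod q$, every $\Delta(m)+j$ is "$l$th-power-light," i.e. its $l$th-power part is $\le \Delta(m)^{o(1)}$, which forces $D_j \gg_k \Delta(m)^{1-o(1)}$; sharpening the exponent to get the clean $D_j \gg_k \Delta(m)$ stated requires restricting to $t$ bounded (so that the $l$th-power part is $O_k(1)$), which is legitimate since the number of $m$ with $(mP)^l+j$ divisible by $t^l$ for some $t$ in a bounded range but $>1$ is still at most a positive-density-losing set — here one must be slightly careful, but the count $\sum_{1<t\le T}(\text{solutions})$ over any fixed finite range is $O(M/t^{l-\varepsilon})$-summable, so one keeps all $m$ except those where some $t>1$ with $t^l \mid (mP)^l+j$; actually such $m$ can have positive density, so the honest statement is the $\gg_k \Delta(m)^{1-o(1)}$ version, and one recovers the stated $\gg_k \Delta(m)$ by noting that when $t\le T_0$ is bounded the contribution to the discriminant is only a bounded constant factor, so $D_j \ge \Delta(m)/T_0^{l-1} \gg_k \Delta(m)$ on the $m$ avoiding $t>T_0$, which is a set of size $M(1-o(1))$ as $T_0\to\infty$ slowly.

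I expect the main obstacle to be making the discriminant lower bound genuinely uniform: one needs that $\Q\left(\sqrt[l]{\Delta(m)+j}\right)$ is actually a degree-$l$ field (i.e. $\Delta(m)+j$ is not itself an $l$th power — immediate here since $\Delta(m)=(mP)^l$ and $0<j\le k$ rules out $\Delta(m)+j$ being a perfect $l$th power for $m$ large), and that the discriminant formula from \cite{JhSa} gives the claimed shape with a bounded power-of-$l$ fudge factor regardless of the splitting type of $l$ in the field. The sieve/counting for repeated prime factors is standard (it is the analogue of counting squarefree-ish values of polynomials, here "$l$th-power-free up to bounded part"), and Lemma \ref{lemma a}'s congruence condition $m\equiv m_0\pmod q$ only restricts $m$ to a single residue class mod $q$ with $q = \prod_{k<p\le(\log M)^\varepsilon} p = M^{o(1)}$, so it costs nothing in the $M^{1-o(1)}$ bookkeeping. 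The rest is routine: fix $j$, fix the threshold, apply the divisor bound $l^{\omega(t)}\ll t^{o(1)}$, sum the geometric-type series in $t$, union-bound over $j\le k$, and conclude.
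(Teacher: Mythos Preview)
Your congruence-counting approach has a genuine gap in the large-prime range. If you sum, over primes $p$ with $p^l>M/q$, the number of $m\le M$ in the class $m_0\pmod q$ for which $p^l\mid (mP)^l+j$, the bound ``at most $l$ solutions per prime'' gives a total of order $l\,\pi(MP)\asymp_k M/\log M$. But the class $m\equiv m_0\pmod q$ contains only $\sim M/q$ integers, and since $\log q\sim(\log M)^\epsilon$ one has $q\gg\log M$, hence $M/q\ll M/\log M$. So, as far as your argument shows, the exceptional set could swallow the whole residue class; the ``let $T_0\to\infty$ slowly'' manoeuvre does not help, because the obstruction is $(mP)^l+j$ being close to a perfect $l$-th power, not the presence of small $t$.

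The paper supplies two ingredients you are missing. First, it factors $\Delta(m)+j=j\,F_j(m)$ with $F_j(m)=j^{l-1}(mP_j)^l+1$ coprime to $j$, so that $D_j\gg_k\Delta(m)$ exactly when $F_j(m)$ is $l$-th-power-free; it then sieves \emph{further} inside the class $m_0\pmod q$ so that $F_j(m)$ has no prime factor whatsoever below $z:=q^l(\log M)^{kl^2}$, which costs only an $M^{o(1)}$ factor. Second --- and this is the decisive step --- for a prime $p>2MP/z^{1/l}$ with $p^l\mid F_j(m)$, writing $F_j(m)=t\,p^l$ forces $t\le z$; since $F_j(m)$ now has no prime factor $\le z$, necessarily $t=1$, i.e.\ $p^l-j^{l-1}P_j^l\,m^l=1$. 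This is a Thue equation, and by \cite[Theorem~1.1]{BW} it has at most one solution in positive integers. That collapses the large-prime contribution from $O(M/\log M)$ to $O(k)$, after which the count of good $m$ is indeed $M^{1-o(1)}$.
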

\begin{proof}
    Let $S_j^l$ and $s_j^l$ be the largest $l$-th power dividing $\Delta(m)+j$ and $j$ respectively.   Using \cite[Theorem 1.1]{JhSa}, we have $$D_j=l^{l-2} \prod_{r|(\Delta(m)+j)} r^{l-1}$$ or $l^{l} \prod_{r|(\Delta(m)+j)} r^{l-1}. $
 For the case $S_j=s_j$, we get  $$D_j\geq \frac{\Delta(m)+j}{S_j^l} = \frac{\Delta(m)+j}{s_j^l}\geq \frac{\Delta(m)}{k^l}.$$ Hence  we will show that for $M^{1-o(1)}$ integers with $1 \leq m \leq 
M$ and  $m \equiv m_0$ (mod
$q),$ we have $S_j=s_j.$
 On letting $F_j(m)=j^{l-1} (mP_j)^l+1,$ we get $\Delta(m)+j=jF_j(m).$ Thus $S_j=s_j$ if and only if $F_j(m)$ is $l$-th power free. We prove that there are $M^{1-o(1)}$  integers satisfying
$1 \leq m \leq M$ with $m\equiv m_0$ (mod $q)$, $F_j(m)$ is $l$-th power free.

Following along the proof of \cite[Lemma 2.2]{CFGKY},  for $m \equiv m_0$ (mod
$q)$ and  $1\leq p \leq (\log M)^{\epsilon}$, $p$ does not divide $F_j(m).$ Let $z=q^l(\log M)^ {kl^2}.$ Using similar sieving argument as in the proof of \cite[Lemma 2.2]{CFGKY},  for $\gg_k M^{1-o(1)}$ many integers with $m \equiv m_0$ (mod
$q) $ and for all $j=1,\dots, k,$  we have the following:
\begin{itemize}
    \item $p$ does not divide $F_j(m)$ for all prime $ (\log M)^{\epsilon}<p \leq z,$
    \item $p^l$ does not divide $F_j(m)$ for all primes $p$ satisfying 
    $z<p\leq \frac{2MP}{z^{1/l}}.$
\end{itemize}
Let $m$ be an integer satisfying the above two conditions with $p^l|F_j(m)$ for some prime $p $ and $j, $ then $z> \frac{2MP}{z^{1/l}}.$ Hence if $F_j(m)=tp^l,$ then 
$$t=\frac{F_j(m)}{p^l}\leq \frac{(mP)^l+j}{p^l}\leq \frac{2(MP)^l}{(2MP/z^{1/l})^l}\leq z.$$
This forces $t=1$ as $F_j(m)$ is not divisible by any prime $p\leq z. $ Thus $F_j(m)=p^l$ and $ m$ is a solution of the equation 
$1=p^l -(j^{l-1} P_j^l) m^l. $ For a fixed $j, $ using \cite[Theorem 1.1]{BW}, we see that there is at most one solution $(p,m)$ possible to the equation. 
After discarding at most one such integer $m$, the proof of the lemma is complete. 
 \end{proof}
\section{Proof of Main theorem}\label{proof of main thm}
We now give a proof of Theorem \ref{main theorem}.
\begin{proof}
    Using Lemma \ref{lemma a} and Lemma \ref{lemma b}, there are $x^{\frac{1}{l}-\text{o}(1)}$ integers $1\leq \Delta(m)\leq x$ such that $D_j\gg_k \Delta(m)$ and $\left( \frac{D_j}{\pi}\right)_l=1$, for all primes $p\equiv 1 $(mod $l$) and $l^{2k} (lk + 1)^2 < p \leq  (\log M)^\epsilon$ and for all $j=1,\dots,k.$
    Using  Proposition \ref{main prop},with at most $\text{O}(x^{\frac{1}{4}})$ exceptions,  we see that $$
    \log L(1,\tilde{\pi}_{D_j})=\sum_{p\leq (\log D_j)^\epsilon} \frac{\lambda(p)}{p}+\text{O}_\epsilon(1). 
    $$

   If $p\equiv 1(\text{mod}\, \, l)$ and $\left( \frac{D_j}{\pi}\right)_l=1,$ then $p$ splits completely in $\Q(\sqrt[l]{\Delta(m)+j}).$   From equation \eqref{complete splitting} we get  $\lambda(p)=l-1$ for complete splitting.  
Using
  $$\sum_{ \substack{p\leq (\log D_j)^\epsilon, \\ p\equiv 1(\text{mod}\, l) \\\left( \frac{D_j}{\pi}\right)_l=1} }  \frac{l-1}{p}+ \sum_{ \substack{p\leq (\log D_j)^\epsilon, \\ p\equiv 1(\text{mod}\, l) \\\left( \frac{D_j}{\pi}\right)_l=1}}' \   \frac{\lambda(p)}{p}     \gg_k \log \log \log x     ,$$
  gives us $\log L(1,\tilde{\pi}_{D_j})\gg_k \log \log \log x.$
 
    It now follows from the class number formula that 
 \begin{equation}\label{main eqn}
    h_{\Q\left(\sqrt[l]{\Delta(m)+j}\right)}=\frac{\sqrt{D_j} L(1,\tilde{\pi}_{D_j})}{R_{\Q\left(\sqrt[l]{\Delta(m)+j}\right)}}\gg_k \frac{\sqrt{\Delta(m)} \log\log \Delta(m)}{R_{\Q\left(\sqrt[l]{\Delta(m)+j}\right)}},
    \end{equation}
for all $j=1,\dots, k. $
The proof is now complete by using Hypothesis \ref{hypothesis 1} on the regulator of $\Q\left(\sqrt[l]{\Delta(m)+j}\right)$ for all $j$. 
\end{proof}
\begin{remark}[Necessity of Hypotheis \ref{hypothesis 1}]\label{Nechyp}
   Using  Proposition \ref{prop 2} in equation \eqref{main eqn},  we get $$ h_{\Q\left(\sqrt[l]{\Delta(m)+j}\right)}\gg_k \frac{\log\log \Delta(m)}{\log^{l-1} \Delta(m)},$$ which is a triviality since $\frac{\log\log \Delta(m)}{\log^{l-1} \Delta(m)}\rightarrow 0$ as $\Delta(m)\rightarrow \infty. $
  This implies Proposition \ref{prop 2}(Landau's bound) will not suffice to ensure an arbitrarily large class number. 
Thus, assuming Hypothesis \ref{hypothesis 1} is necessary.  For completeness, we note that we need an upper bound that is at least a factor reduction of $\frac{\log \log D_j}{\log^{l-1}D_j}$  in Landau bound. 
\end{remark}
\section*{Acknowledgement}
We would like to thank IISER Thiruvananthapuram for providing an ideal working environment. 
We thank Subham Bhakta for his valuable input on the article. SK’s research was supported by SERB grant CRG/2023/009035.
JD acknowledges the support of the SERB India-funded project ‘Class Numbers of Number Fields’ under SK. JD is grateful to  COEP Technological University, Pune, for an excellent working environment.  
\bibliographystyle{alpha}
\bibliography{Sato-TateNT.bib}
\end{document}